\newtheorem{cor}{Corollary}
\newtheorem{thm}{Theorem}
\newcommand{\cst}{\ifmmode{\mathrm{C}^*}\else{$\mathrm{C}^*$}\fi}
\newcommand{\tens}{\otimes}
\newcommand{\eps}{\varepsilon}
\newcommand{\ph}{\varphi}
\newcommand{\comp}{\circ}
\newcommand{\RR}{\mathbb{R}}
\newcommand{\TT}{\mathbb{T}}
\newcommand{\CC}{\mathbb{C}}
\newcommand{\sS}{\mathbb{S}}
\newcommand{\GG}{\mathbb{G}}
\newcommand{\ZZ}{\mathbb{Z}}
\newcommand{\cI}{\mathcal{I}}
\newcommand{\cJ}{\mathcal{J}}
\newcommand{\cK}{\mathcal{K}}
\newcommand{\cA}{\mathcal{A}}
\newcommand{\cR}{\mathcal{R}}
\newcommand{\bz}{\boldsymbol{z}}
\newcommand{\bx}{\boldsymbol{x}}
\newcommand{\bs}{\boldsymbol{s}}
\newcommand{\id}{\mathrm{id}}
\newcommand{\I}{\mathbb{1}}
\renewcommand{\Bar}[1]{\overline{#1}}
\newcommand{\st}{\:\vline\:}
\DeclareMathOperator{\C}{C}
\begin{document}

\title{Non existence of group structure on some quantum spaces}

\date{\today}

\author{Piotr M.~So{\l}tan}
\address{Department of Mathematical Methods in Physics\\
Faculty of Physics\\
Warsaw University}
\email{piotr.soltan@fuw.edu.pl}

\thanks{Research partially supported by Polish government grant
no.~N201 1770 33.}

\begin{abstract}
We prove that some well known compact quantum spaces like quantum tori and some quantum two-spheres do not admit a compact quantum group structure. This is achieved by considering existence of traces, characters and nuclearity of the corresponding $\mathrm{C}^*$-algebras.
\end{abstract}

\maketitle

\section{Introduction}

Classical topology gives us many tools with which one can investigate whether a given topological space carries a structure of topological group. In this paper we want to consider the same problem for non-commutative or \emph{quantum} spaces. Not surprisingly some standard tricks from classical topology are not available in the non-commutative setting. As an example consider the standard exercise to show that an interval with one or two end-points does not admit a structure of a topological group. The solution uses the fact that the neighborhoods of an end-point cannot be carried over homeomorphically onto neighborhoods of interior points of the interval and the hypothetical group law would provide such homeomorphisms. However, as for now, such arguments cannot be carried over to non-commutative topology.

Interestingly, however, many new tools which have no classical analog can be employed to study this problem for quantum spaces. Those used in this paper include studying existence of traces on considered \cst-algebras, the structure of the space of their characters and also properties such as nuclearity.

We will only consider \emph{compact quantum spaces,} i.e.~ones described by unital \cst-algebras via the well established correspondence extending the Gelfand equivalence (\cite{ncgnt,ncg}). The reason for this if twofold. First of all there is still some doubt as to a precise definition of a non-compact quantum group (quantum space with group structure) although the Kustermans-Vaes definition (\cite{kv}) has much more appeal than the definition involving modular multiplicative unitaries (\cite{mmu2}). The second reason is the very rich structure compact quantum groups have and ample literature on the subject (see e.g.~\cite{cqg,coam}).

Before proceeding let us address a related problem. Recall that a \emph{compact quantum semigroup} is a pair $\sS=(A,\Delta)$ consisting of a unital \cst-algebra $A$ and a \emph{comultiplication}, i.e.~a coassociative morphism $\Delta:A\to{A\tens{A}}$ (\cite{coam,qs}). A \emph{compact quantum group} is a compact quantum semigroup $\GG=(A,\Delta)$ such that the sets
\[
\bigl\{\Delta(a)(\I\tens{b})\st{a,b\in{A}}\bigr\}\quad\text{and}\quad
\bigl\{(a\tens\I)\Delta(b)\st{a,b\in{A}}\bigr\}
\]
are linearly dense in $A\tens{A}$ (\cite[Definition 2.1]{cqg}). The problem whether a given quantum semigroup is in fact a compact quantum group might be quite difficult to approach directly. In \cite{qcom} we showed that if $M$ is a finite dimensional \cst-algebra then the quantum semigroup $\sS=(A,\Delta)$ of all self-maps of the finite quantum space underlying $M$ is not a compact quantum group unless $M=\CC$ (\cite[Proposition 2.1]{qcom}). One must be careful here though. The simplest example obtained by taking $M=\CC^2$ leads to $A=\cst(\ZZ_2*\ZZ_2)$ which, of course, does admit a compact quantum group structure, but with a comultiplication different from $\Delta$.

A more explicit example was given in \cite[Corollary 4.4]{qcom} where we showed that the quantum commutant (\cite{qs,qcom}) of a certain automorphism of $M_2$ is not a compact quantum group. In this example $A$ is the universal \cst-algebra generated by three elements $\alpha,\beta$ and $\gamma$ such that $\beta=\beta^*$, $\gamma=\gamma^*$ and
\[
\begin{aligned}
\alpha^*\beta+\gamma\alpha^*+\alpha\gamma+\beta\alpha&=0,&\alpha^2+\beta\gamma&=0,\\
\alpha\beta+\beta\alpha^*&=0,&\gamma\alpha+\alpha^*\gamma&=0,\\
\alpha^*\alpha+\gamma^2+\alpha\alpha^*+\beta^2&=\I,\\
\end{aligned}
\]
while comultiplication $\Delta$ acts on generators in the following way:
\[
\begin{split}
\Delta(\alpha)&=\I\tens\alpha+(\alpha^*\alpha+\gamma^2)\tens(\alpha^*-\alpha)+
\alpha\tens\beta+\alpha^*\tens\gamma,\\
\Delta(\beta)&=(\alpha\gamma+\beta\alpha)\tens(\alpha-\alpha^*)+\beta\tens\beta+\gamma\tens\gamma,\\
\Delta(\gamma)&=(\beta\alpha+\alpha\gamma)\tens(\alpha^*-\alpha)+\gamma\tens\beta+\beta\tens\gamma.
\end{split}
\]
The density conditions from the definition of a compact quantum group are difficult to check. However, using some advanced elements of the theory of compact quantum groups, we are able to prove that $\sS=(A,\Delta)$ is not a compact quantum group.

In a similar context in \cite{so3} we showed that the quantum semigroup of all maps of the quantum space underlying $M_2$ preserving a certain degenerate state is not a compact quantum group. In this case $A$ is the universal \cst-algebra generated by two elements $\beta$ and $\delta$ satisfying the following relations
\[
\begin{aligned}
\beta\beta^*&=\I,\!\!\!\!&\delta^2&=0,\\
\beta\delta&=0,\!\!\!\!&\beta\delta^*&=0,\\
\beta^*\beta&+\delta^*\delta\!\!\!\!&+\:\delta\delta^*&=\I
\end{aligned}
\]
and the comultiplication $\Delta$ acts on generators in the following way:
\[
\begin{split}
\Delta(\beta)&=\beta\tens\beta,\\
\Delta(\delta)&=\delta\tens\beta+\beta^*\beta\tens\delta+\delta^*\delta\tens\delta.
\end{split}
\]
It is shown in \cite[Proposition 5.5]{so3} that $\sS=(A,\Delta)$ is not a quantum group.

In contrast to the cases dealt with above, in what follows we will not make any assumptions on the particular form of the comultiplication. We will show that the following compact quantum spaces do not admit \emph{any} compact quantum group structure:
\begin{itemize}
\item all quantum tori,
\item standard Podle\'s sphere,
\item quantum two-spheres of Bratteli-Elliott-Evans-Kishimoto,
\item Natsume-Olsen quantum spheres.
\end{itemize}
The quantum tori have been considered as non-commutative geometric objects in \cite{connes80,rie2} and numerous other papers. For the survey and references on the remaining quantum spaces we recommend \cite{ld} as well as the original papers \cite{podles,beek1,beek2,beek3,natsume}.

Let us now describe briefly the contents of the paper. In the next section we recall some basic elements and terminology of the theory of compact quantum groups. In Section \ref{non} we formulate and prove the main results. This section is split into two subsections. The first dealing with algebras with abundant traces while the other uses the fact that the considered \cst-algebras have very few \emph{classical points}.

\section{Preliminaries on compact quantum groups}\label{pre}

Let $\GG=(A,\Delta)$ be a compact quantum group. The \cst-algebra $A$ is referred to as the \emph{algebra of continuous functions on $\GG$.} Theorem 2.2 of \cite{cqg} tells us that there is a dense unital $*$-subalgebra $\cA$ in $A$ such that $\bigl.\Delta\bigr|_{\cA}:\cA\to\cA\tens_{\mathrm{alg}}\cA$ and with this  comultiplication $\cA$ is a Hopf $*$-algebra. By \cite[Theorem 5.1]{coam} this Hopf $*$-algebra is unique. The enveloping \cst-algebra $A_u$ of $\cA$ carries a comultiplication $\Delta_u:A_u\to{A_u\tens{A_u}}$ such that $\GG_u=(A_u,\Delta_u)$ is a compact quantum group. Note that $\cA$ is then naturally a dense subalgebra of $A_u$.

There is a unique epimorphism $\rho:A_u\to{A}$ extending the identity map on $\cA$ and it satisfies $(\rho\tens\rho)\comp\Delta_u=\Delta\comp\rho$. The compact quantum group $\GG_u$ is called the \emph{universal version} of $\GG$. We will often use the fact that any $*$-character of $\cA$ extends continuously to $A_u$ (\cite[Theorem 3.6]{coam}).

The famous theorem of Woronowicz (\cite[Theorem 4.2]{pseudogr}, \cite[Theorem 2.3]{cqg}) says that if $\GG=(A,\Delta)$ is a compact quantum group then there exists a unique state $h$ on $A$ such that
\[
(h\tens\id)\Delta(a)=(\id\tens{h})\Delta(a)=h(a)\I
\]
for any $a\in{A}$. This state is called the \emph{Haar measure} of $\GG$. The state $h$ might not be faithful, but it is known (\cite[Page 656]{pseudogr}, see also \cite[Theorem 2.1]{coam}) that the ideal $\cJ=\bigl\{a\in{A}\st{h}(a^*a)=0\bigr\}$ is two-sided. Let $A_r=A/\cJ$ and denote by $\lambda$ the quotient map $A\to{A_r}$. The map $\lambda$ is injective on the dense subalgebra $\cA$, so $A_r$ might be viewed as a different completion of $\cA$. It can be shown that there is a comultiplication $\Delta_r:A_r\to{A_r\tens{A_R}}$ extending that of $\cA$ such that $\GG_r=(A_r,\Delta_r)$ is a compact quantum group called the \emph{reduced version} of $\GG$. We have $(\lambda\tens\lambda)\comp\Delta=\Delta_r\comp\lambda$. Moreover the Haar measure $h_r$ of $\GG_r$ is faithful and $h=h_r\comp\lambda$ (similarly the Haar measure $h_u$ of $\GG_u$ satisfies $h_u=h\comp\rho$).

To illustrate the concepts dealt with above take $A$ to be a \cst-completion of the group algebra $\CC[\Gamma]$ of a discrete group $\Gamma$ such that the comultiplication $\gamma\mapsto\gamma\tens\gamma$ extends to a $*$-homomorphism $\Delta:A\to{A\tens{A}}$. Then $\GG=(A,\Delta)$ is a compact quantum group. The $\cst$-algebras $A_u$ and $A_r$ are $\cst(\Gamma)$ and $\cst_r(\Gamma)$. The Haar measure on $\GG_r$ is the von Neumann trace on $\cst_r(\Gamma)$. Clearly if $\Gamma$ is amenable then $A$, $A_u$ and $A_r$ are canonically isomorphic. Note that the example $\GG=(A,\Delta)$ with $A=\cst_r(\mathbb{F}_2)$ shows that the lagebra of continuous functions on a comapct quantum group may be siple.

Now let $\GG=(A,\Delta)$ be a compact quantum group. The special situation when $\rho$ and $\lambda$ are isomorphism is called \emph{co-amenability} of $\GG$. There are various equivalent definitions of this concept and we will use some of them. For details we refer to \cite{coam}. In particular if $\GG$ is co-amenable then the co-unit $e$ of $\cA$ extends to a character of $A$ (because $A=A_u$) and this extension (also denoted by $e$) still satisfies $(e\tens\id)\comp\Delta=(\id\tens{e})\comp\Delta=\id$. One can easily see that if any of the \cst-algebras $A$, $A_u$ or $A_r$ is commutative then they are all the same and $\GG$ is co-amenable.

The Haar measure $h$ of $\GG$ need not be a trace. Whenever $h$ is a trace we say that $\GG$ is of \emph{Kac type}. Groups of Kac type can be characterized in many different ways (see e.g.~\cite[Theorem 2.5]{cqg}). The study of compact quantum group which are not of Kac type lead to the discovery that there always exists a certain family $(f_z)_z\in\CC$ of multiplicative functionals on $\cA$ which encodes modular properties of $h$. We always have $f_0=e$ (the co-unit) and the family is trivial (i.e.~$f_z=f_0$ for all $z$) if and only if $\GG$ is of Kac type (\cite[]{cqg}). Moreover this family is holomorphic in the sense that for any $a\in\cA$ the function $z\mapsto{f_z(a)}$ is entire. For $z$ on the imaginary axis $f_z$ are $*$-caracters, and consequently they extend to characters of $A_u$. We call these functionals the \emph{Woronowicz characters} of $\GG$.


\section{Non existence of compact quantum group structure}\label{non}

\subsection{Algebras with abundant traces}\label{traces}

We will first recall a result formulated in \cite[Remark A.2]{qbc}.

\begin{thm}
Let $\GG=(A,\Delta)$ be a compact quantum group. Assume that for any $a\in{A}$ there exists a tracial state $\tau$ on $A$ such that $\tau(a^*a)\neq{0}$. Then $\GG$ is of Kac type.
\end{thm}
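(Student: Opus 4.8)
The plan is to exploit the Woronowicz characters $(f_z)_z$ together with the hypothesis that traces detect every element, and to show that the abundance of traces forces each $f_z$ to be trivial, so that $\GG$ must be of Kac type by the characterization recalled in Section \ref{pre}.

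First I would pass to the reduced version $\GG_r=(A_r,\Delta_r)$, whose Haar measure $h_r$ is faithful; it suffices to show $h_r$ is a trace. The key structural fact I would invoke is that the modular properties of $h$ (equivalently $h_r$) are governed by the one-parameter group of Woronowicz characters: there is an explicit formula expressing the lack of traciality of $h$ through the functionals $f_1$ (or $f_z$ for $z$ near $0$). Concretely, on the dense Hopf $*$-algebra $\cA$ one has the Woronowicz-type identity $h(ab)=h\bigl(b\,(f_1\comp S^2)(a)\bigr)$-style relation, or more precisely the commutation of $h$ with the modular automorphism group built from $(f_z)$. So the heart of the matter is to show that the hypothesis kills the modular automorphism group, i.e.\ forces it to be trivial on $\cA$.

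The main step, then, is the following: I would take an arbitrary tracial state $\tau$ on $A$, note that it is automatically invariant under the relevant modular data, and average $\tau$ against the Haar state (using invariance of $h$) to produce a tracial state that is ``compatible'' with $h$. More useful is the converse direction: if $h$ itself were \emph{not} a trace, the failure is witnessed by some $a\in\cA$ with, say, the one-parameter group $t\mapsto \sigma_t(a)=(f_{it}\comp\,\cdot\,\comp f_{it})(a)$ (the modular/scaling group) nontrivial, and I would aim to produce from this an element $b\in A$ on which \emph{no} tracial state can be nonzero, contradicting the hypothesis. The mechanism is that any tracial state $\tau$ must satisfy $\tau\comp\sigma_t=\tau$ for the modular automorphism group of $h$ (tracial states are fixed by modular automorphisms of any KMS weight dominated in the appropriate sense — here one uses that $h_r$ is faithful and that on the von Neumann algebra generated by $A_r$ the modular group of $h_r$ acts, while a trace on that algebra must be $\sigma^{h_r}$-invariant). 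If the scaling/modular group is nontrivial, one extracts an element $b=a-\sigma_t(a)$ or a spectral piece thereof which lies in the kernel of every tracial state yet is nonzero in $A$, which is the desired contradiction.

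The hard part will be making the bridge between ``tracial state on the $\cst$-algebra $A$'' and ``invariance under the modular automorphism group of the Haar state'' fully rigorous, since the modular group a priori lives at the von Neumann algebra level of the GNS representation of $h_r$ and need not restrict to $A_r$. I expect the clean route is: (i) reduce to $\GG_r$; (ii) observe any tracial state $\tau$ on $A_r$ extends to a trace on $\pi_{h_r}(A_r)''$ (after a further averaging/limit argument, or by noting the relevant trace can be taken normal on the bidual), which is therefore $\sigma^{h_r}$-invariant; (iii) combine with faithfulness of $h_r$ and the hypothesis (every $a$ is detected by some $\tau$) to conclude the modular group fixes a weakly dense set, hence is trivial, hence $h_r$ is a trace and $\GG$ is of Kac type. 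Alternatively, and perhaps more in the spirit of the cited \cite[Remark A.2]{qbc}, one bypasses modular theory entirely by working directly with the Woronowicz characters: show that for each purely imaginary $z$, $f_z$ extends to $A$ and that the trace hypothesis forces $f_z$ to coincide with the counit on the image of $\cA$ in $A$ (because $f_z$ is a character and the positivity/trace constraints pin down its values on the $K_0$- or spectral data), giving $f_z=f_0$ for all $z$ and hence the Kac property.
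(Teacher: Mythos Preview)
The paper does not actually prove this theorem: it is merely quoted from \cite[Remark A.2]{qbc}, and the text moves directly to the corollary about rotation algebras. So there is no proof here to compare your attempt against.

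On the merits of your sketch itself: you correctly identify the relevant machinery (the Woronowicz characters $f_z$ and the modular data of $h$), and the overall strategy of showing that abundant traces force the modular group to be trivial is reasonable. But both routes you outline have genuine gaps as written.

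First, passing to $\GG_r$ at the outset is delicate. The hypothesis is that tracial states on $A$ form a faithful family; a tracial state on $A$ need not factor through $A_r=A/\cJ$ (for example, on $A=\cst(\mathbb{F}_2)$ the counit is a trace that does not vanish on $\cJ$). So you cannot simply transport the hypothesis to $A_r$, yet your subsequent argument uses traces on $A_r$.

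Second, the heart of your Route 1 is the claim that every tracial state $\tau$ satisfies $\tau\comp\sigma_t^{h_r}=\tau$. This is not a general fact about traces versus the modular group of some \emph{other} faithful state, and the parenthetical justification (``tracial states are fixed by modular automorphisms of any KMS weight dominated in the appropriate sense'') is not a standard theorem. Without this invariance, the element $b=a-\sigma_t(a)$ you produce has no reason to be annihilated by every trace. The detour through $\pi_{h_r}(A_r)''$ does not help either: a tracial state on the \cst-algebra $A_r$ need not extend \emph{normally} to that von Neumann algebra, so the usual ``traces are $\sigma$-invariant on a finite von Neumann algebra'' argument is unavailable.

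Your Route 2 is essentially a restatement of the goal: you assert that the trace hypothesis ``pins down'' the values of $f_{it}$ via $K_0$- or spectral considerations, but supply no mechanism, and in any case the $f_{it}$ are only known to extend to $A_u$, not to $A$.
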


In the proof of the next corollary we will use the known facts about rotation algebras $A_\theta$ (\cite{rie1,rie2}).

\begin{cor}
For any $\theta\in]0,1[$ the rotation algebra $A_\theta$ does not admit a compact quantum group structure.
\end{cor}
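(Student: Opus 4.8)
The plan is to combine the Theorem just stated with two classical facts about the irrational and rational rotation algebras $A_\theta$: that they carry many tracial states, and that they are not of Kac type as quantum groups for a cohomological/$K$-theoretic reason. First I would recall that for every $\theta\in\,]0,1[$ the algebra $A_\theta$ possesses at least one tracial state $\tau$ (the canonical trace coming from the $\TT^2$-action, or from the conditional expectation onto the copy of $C(\TT)$), and that this trace is faithful — this is standard in \cite{rie1,rie2}. Faithfulness gives $\tau(a^*a)\neq 0$ for every nonzero $a\in A_\theta$, so the hypothesis of the Theorem is satisfied: for any $a\in A_\theta$ there is a tracial state $\tau$ with $\tau(a^*a)\neq 0$. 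Hence, if $A_\theta$ admitted a compact quantum group structure $\GG=(A_\theta,\Delta)$, that $\GG$ would necessarily be of Kac type.

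Next I would derive a contradiction from being of Kac type. For a compact quantum group of Kac type the Haar state $h$ is a faithful trace on the reduced version $A_r$, and the counit together with the antipode behave as in the classical case; in particular the Hopf $*$-algebra $\cA\subseteq A_\theta$ is dense and the counit $e$ is a $*$-character of $\cA$. A $*$-character of $\cA$ is in particular a character of $A_\theta$, i.e. a nonzero homomorphism $A_\theta\to\CC$. But it is well known that for irrational $\theta$ the algebra $A_\theta$ is simple and infinite dimensional, so it has no characters at all — immediate contradiction. For rational $\theta=p/q$ the algebra $A_\theta$ is isomorphic to the algebra of continuous sections of a bundle of $q\times q$ matrices over $\TT^2$, which again admits no one-dimensional representations since its image under any irreducible representation is $M_q(\CC)$ with $q\ge 2$; so again there is no character and we get a contradiction. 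In both cases the assumed group structure cannot exist.

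Alternatively, and perhaps more uniformly, one can avoid the counit discussion and argue via $K$-theory: a compact quantum group of Kac type always has a tracial Haar state, and on the level of $K$-theory the trace-induced state on $K_0$ must take the value $1$ on the class $[\I]$ and, because of the existence of the counit and the module structure coming from the comultiplication, the range of the trace on projections must contain $\ZZ$ as a unital subgroup in a way incompatible with the known range $\ZZ+\theta\ZZ$ for irrational $\theta$. I expect the cleanest route, though, to be the character obstruction of the previous paragraph, since "no characters" is a very robust feature of $A_\theta$ and the existence of a counit character on $\cA\subseteq A_\theta$ for Kac-type (indeed any) compact quantum groups is exactly what the preliminaries section records.

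The main obstacle is making the passage from "Kac type" to "has a character" fully rigorous: the counit $e$ lives a priori only on the dense subalgebra $\cA$ and on the universal version $A_u$, not obviously on $A_\theta$ itself. One must either invoke co-amenability (which holds here, since $A_\theta$ is certainly not of the exotic type and in the irrational case one can appeal to nuclearity/amenability of $A_\theta$ to see $A_\theta=A_u=A_r$), or else argue directly: $\cA$ is dense in $A_\theta$, $A_\theta$ has no nonzero finite-dimensional quotients (simplicity for irrational $\theta$; for rational $\theta$ all irreducible representations are $q$-dimensional with $q\ge 2$), hence $\cA$ itself admits no $*$-character, contradicting the fact that every compact quantum group's Hopf $*$-algebra carries the counit as a $*$-character. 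This last formulation is clean and sidesteps any subtlety about which completion one is working with, so that is the version I would write up.
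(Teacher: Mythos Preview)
Your overall strategy coincides with the paper's: use the faithful trace on $A_\theta$ together with the preceding Theorem to force any hypothetical $\GG=(A_\theta,\Delta)$ to be of Kac type, and then derive a contradiction from the fact that $A_\theta$ has no characters. Where you diverge from the paper is precisely at the point you yourself flag as ``the main obstacle,'' and your preferred resolution of it is not valid.

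The argument you settle on at the end is: $\cA$ is dense in $A_\theta$, and $A_\theta$ has no nonzero finite-dimensional quotients, ``hence $\cA$ itself admits no $*$-character.'' This implication is false. A $*$-character of a dense $*$-subalgebra need not be continuous for the ambient \cst-norm, so the absence of characters on $A_\theta$ says nothing about characters of $\cA$. The paper's own Section~\ref{pre} furnishes the counterexample: take $\Gamma=\mathbb{F}_2$, $\cA=\CC[\Gamma]$, $A=\cst_r(\Gamma)$. Then $\cA$ is dense in the simple \cst-algebra $A$, yet $\cA$ carries the counit $\gamma\mapsto 1$ as a $*$-character. In other words, every Hopf $*$-algebra associated with a compact quantum group has a $*$-character (the counit), regardless of whether the \cst-completion does; you cannot contradict that.

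The correct bridge is the one you mention but set aside: nuclearity. The paper argues that $A_\theta$ is nuclear (crossed product of a commutative algebra by an amenable group), and then invokes the theorem of B\'edos--Murphy--Tuset that for a compact quantum group of Kac type with nuclear reduced algebra one has co-amenability. Co-amenability gives $A_\theta=A_u$, so the counit does extend to a genuine character of $A_\theta$, and \emph{now} the absence of characters on $A_\theta$ (simplicity for irrational $\theta$; $A_\theta\cong M_N\tens\C(\TT^2)$ for rational $\theta$) yields the contradiction. Your write-up should commit to this route rather than the ``direct'' one.
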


\begin{proof}
The \cst-algebra $A_\theta$ admits a faithful trace (unique one if $\theta$ is irrational). Therefore if there existed a comultiplication $\Delta:A_\theta\to{A_\theta}\tens{A_\theta}$ such that $\GG=(A_\theta,\Delta)$ were a compact quantum group, the Haar measure on $\GG$ would be a trace. It is well known that $A_\theta$ is a nuclear \cst-algebra (e.g.~because it is a crossed product of a commutative algebra by an action of an amenable -- in fact commutative -- group). By \cite[Theorem 1.1]{amII} (see also \cite{ruan,reiji}) the compact quantum group $\GG$ must be co-amenable. In particular the \cst-algebra $A_\theta$ would have to admit a continuous co-unit. Clearly this is not the case, since for $\theta$ irrational $A_\theta$ is simple and for rational $\theta$ we have $A_\theta\cong{M_N}\tens{\C(\TT)}$ which clearly does not admit a character.
\end{proof}

It is interesting to note that if $\theta$ is irrational the von Neumann algebra we obtain from $A_\theta$ via the GNS construction is the hyperfinite factor of type $\mathrm{II}_1$. This von Neumann algebra \emph{is} a von Neumann algebra of measurable functions on a compact quantum group. For example it is the group von Neumann algebra of any discrete amenable i.c.c.~group. Moreover, as shown in \cite{dt} the quantum two-torus can be a ``part'' of a compact quantum group (namely the \emph{quantum double torus}).

Let us summarize the argument used to prove that quantum $2$-tori are not quantum groups in the following statement:

\begin{thm}\label{pierw}
Let $A$ be a nuclear unital \cst-algebra having a faithful family of tracial states and not admitting a continuous character. Then there does not exist a comultiplication $\Delta:A\to{A\tens{A}}$ such that $(A,\Delta)$ is a compact quantum group.
\end{thm}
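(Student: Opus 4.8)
The plan is to combine the three earlier ingredients in the same way as in the proof of the corollary on rotation algebras, but now phrased abstractly. Suppose, for contradiction, that $\Delta$ is a comultiplication making $\GG=(A,\Delta)$ a compact quantum group. First I would invoke the first theorem of Section \ref{traces}: since $A$ has a faithful family of tracial states, in particular for every $a\in A$ there is a tracial state $\tau$ on $A$ with $\tau(a^*a)\neq 0$, so $\GG$ is of Kac type. (In fact faithfulness of the family is more than enough here; we only need it for the nuclearity step below.)

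Next I would use nuclearity of $A$ together with \cite[Theorem 1.1]{amII} to conclude that $\GG$ is co-amenable. Here one has to be slightly careful about which of the \cst-algebras $A$, $A_u$, $A_r$ is assumed nuclear in the cited theorem; since $A$ is given nuclear and $A_r$ is a quotient of $A$ while $A_u$ maps onto $A$, one argues that the relevant algebra in the statement of \cite{amII} is nuclear and hence $\rho$ and $\lambda$ are isomorphisms. Co-amenability then gives, as recalled in Section \ref{pre}, that the co-unit $e$ of the dense Hopf $*$-algebra $\cA$ extends to a character of $A=A_u$.

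Finally, the existence of such a character contradicts the hypothesis that $A$ admits no continuous character, completing the argument.

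I do not expect a genuine obstacle in this plan: each of the three steps is a direct citation of a result available in the excerpt or the literature. The only point requiring a little care is the middle step, namely making sure the hypothesis ``$A$ nuclear'' matches the nuclearity hypothesis in \cite[Theorem 1.1]{amII}, which is phrased for co-amenability and may nominally refer to $A_r$; but since nuclearity passes to quotients, $A$ nuclear forces $A_r$ nuclear, and that is all that is needed. It is also worth remarking that the Kac-type conclusion from the first step is not strictly necessary for the final contradiction—co-amenability plus absence of a character already suffices—but it explains why the hypotheses are natural and records that $\GG$ would have to be an amenable Kac algebra with a simple-or-matrix-valued function algebra, which is absurd.
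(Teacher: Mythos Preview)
Your argument is correct and is exactly the paper's approach; indeed, the paper offers no separate proof of Theorem \ref{pierw} but presents it explicitly as a summary of the preceding corollary's proof, whose three steps (faithful traces $\Rightarrow$ Kac type; nuclearity $+$ Kac type $\Rightarrow$ co-amenable via \cite[Theorem 1.1]{amII}; co-amenable $\Rightarrow$ character) you have reproduced.

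One clarification on your closing remark: the Kac-type conclusion is not merely explanatory. The cited result \cite[Theorem 1.1]{amII} (and the related results of Ruan and Tomatsu) deduces co-amenability from nuclearity of $A_r$ \emph{only} under the hypothesis that the Haar state is tracial; without step one you cannot invoke it. So the first step is genuinely load-bearing, not just a natural side observation.
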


It follows that the Bratteli-Elliot-Evans-Kishimoto quantum spheres (\cite{beek1,beek2,beek3,ld}) do not admit quantum group structure. The algebra $C_\theta$ of continuous functions on the BEEK two-sphere for the deformation parameter $\theta$ is the fixed point subalgebra of $A_\theta$ under the action of $\ZZ_2$ mapping $u$ and $v$ to their adjoints. This \cst-algebra is nuclear and possesses a faithful trace. Also $C_\theta$ does not admit a character.

The same reasoning now shows that higher dimensional quantum tori (\cite{rie2}) do not admit a compact quantum group either. They do not have characters, have a faithful trace (often unique) and are nuclear. Note also that Theorem \ref{pierw} rules out many $\mathrm{AF}$-algebras because they are nuclear, admit traces and often do not admit characters (like e.g.~all $\mathrm{UHF}$-algebras).

\subsection{Algebras admitting few characters}

In Section \ref{traces} we showed that some \cst-algebras do not admit a compact quantum group structure using the fact that they do not possess a character. However, in some situations, we can use the existence of a character on a \cst-algebra to prove that it cannot be endowed with a comultiplication making it into a compact quantum group. In the cases we consider it is possible to prove that should the quantum space under consideration be a compact quantum group then the Haar measure cannot be a trace. This, together with co-amenability, guarantees existence of many characters which we know is not the case.

In case of the quantum two-spheres of Podle\'s (the standard quantum sphere, see \cite{podles,ld}) and Natsume-Olsen (\cite{natsume,ld}) our reasoning will be based on the following theorem of Bedos, Murphy and Tuset:

\begin{thm}[{\cite[Theorem 2.8]{coam}}]\label{BMT}
Let $\GG=(A,\Delta)$ be a compact quantum group such that its Haar measure is faithful. Then $\GG$ is co-amenable if and only if $A$ admits a character.
\end{thm}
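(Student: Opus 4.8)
The statement is a biconditional, and the plan is to establish the two implications separately; the forward one will be essentially immediate, while the backward one carries all the content. Suppose first that $\GG$ is co-amenable, so that $\rho\colon A_u\to A$ is an isomorphism. The co-unit $e$ of the Hopf $*$-algebra $\cA$ is a $*$-character of $\cA$ and hence extends continuously to $A_u$; since $\rho$ identifies $A_u$ with $A$, this gives a character of $A$. (This is precisely the remark made after the definition of co-amenability in Section \ref{pre}.) So this direction costs nothing.

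For the converse, assume $A$ admits a character $\chi$; we must deduce co-amenability. Since the Haar measure $h$ is faithful we have $\cJ=\{0\}$, so $A=A_r$ and the GNS representation $\pi_h$ of $h$ is faithful; in particular the norm of $A$ restricted to the dense Hopf $*$-subalgebra $\cA$ coincides with the operator norm in $\pi_h$. Restricting $\chi$ to $\cA$ yields a $*$-character, and I would invoke the standard fact that the $*$-characters of $\cA$ form a group under the convolution product $(\psi_1,\psi_2)\mapsto(\psi_1\tens\psi_2)\comp\Delta$, with unit the co-unit $e$ and inverse $\psi\mapsto\psi\comp{S}$ (this group is the maximal classical subgroup of $\GG$); in particular $\chi\comp{S}$ is again a $*$-character of $\cA$. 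Put $T:=(\id\tens(\chi\comp{S}))\comp\Delta\colon\cA\to\cA$; it is a $*$-automorphism of $\cA$, with inverse $(\id\tens\chi)\comp\Delta$. Two short computations are the heart of the matter. First, using multiplicativity of $\chi$ together with the antipode axiom $m\comp(\id\tens{S})\comp\Delta(a)=e(a)\I$ (with $m$ the multiplication of $\cA$),
\[
\chi\comp{T}=(\chi\tens(\chi\comp{S}))\comp\Delta=\chi\comp{m}\comp(\id\tens{S})\comp\Delta=e.
\]
Second, by left invariance of $h$, i.e.~$(h\tens\id)\comp\Delta(a)=h(a)\I$,
\[
h\comp{T}=(h\tens(\chi\comp{S}))\comp\Delta=(\chi\comp{S})\comp(h\tens\id)\comp\Delta=h.
\]
Thus $T$ is an $h$-preserving $*$-automorphism of $\cA$, so it is unitarily implemented on the GNS Hilbert space of $h$ (by a unitary fixing the cyclic vector) and therefore extends to a $*$-automorphism of $A_r$; in particular $\|T(a)\|=\|a\|$ for every $a\in\cA$. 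Combining the two displays, $|e(a)|=|\chi(T(a))|\le\|T(a)\|=\|a\|$ for all $a\in\cA$, so the co-unit $e$ is bounded for the norm of $A_r$ and hence extends to a character of $A_r$.

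It remains only to note that the extendability of the co-unit to $A_r$ is one of the standard equivalent reformulations of co-amenability (see \cite{coam}), whence $\GG$ is co-amenable. The step I expect to be the genuine obstacle is exactly this last equivalence: the implication ``$e$ bounded for $\|\cdot\|_r$'' $\Longrightarrow$ ``$A_u=A_r$'' is a Hulanicki--Reiter-type statement about the discrete dual $\hat{\GG}$ and is not elementary — I would simply cite it from \cite{coam}. Everything preceding it is routine: a pair of Sweedler-notation calculations in $\cA$, plus the familiar fact that a Haar-measure-preserving $*$-automorphism of $\cA$ is spatially implemented on $L^2(\GG)$.
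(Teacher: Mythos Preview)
The paper does not prove Theorem~\ref{BMT}; it is quoted verbatim from \cite[Theorem~2.8]{coam} and used as a black box. So there is no ``paper's own proof'' to compare against. That said, your argument is sound and is, in fact, essentially the strategy of the original Bedos--Murphy--Tuset proof: translate the given character $\chi$ by the right-regular action to recover the counit, and then invoke the (nontrivial) equivalence ``$e$ bounded on $A_r$'' $\Leftrightarrow$ co-amenability. Two brief remarks on your write-up. First, the claim that $\chi\comp S$ is again a $*$-character deserves a word of justification, since $S$ does not commute with $*$ when $S^2\neq\id$; the cleanest way is to note that $(\id\tens\chi)\comp\Delta$ is a bijective $*$-endomorphism of $\cA$ whose inverse is your $T$, whence $T$ is forced to be a $*$-map. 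Second, your final paragraph is exactly right: the passage from ``$e$ extends to $A_r$'' to ``$\rho$ and $\lambda$ are isomorphisms'' is the genuine analytic input here, and citing \cite{coam} (their Theorem~3.6/3.7) is the appropriate move.
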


Using this we have:

\begin{thm}\label{AdPod}
The standard Podle\'s quantum sphere $S_{q,0}^2$ is not a compact quantum group.
\end{thm}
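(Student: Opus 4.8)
The plan is to argue by contradiction, using Theorem \ref{BMT} together with a close analysis of the structure of $S_{q,0}^2$ and the behaviour of the Woronowicz characters. So suppose $\Delta$ is a comultiplication making $\GG=(A,\Delta)$ with $A=\C(S_{q,0}^2)$ a compact quantum group. The first step is to reduce to the reduced version $\GG_r$: its Haar measure $h_r$ is faithful, and since $S_{q,0}^2$ is nuclear and $\lambda$ is injective on the canonical dense Hopf $*$-algebra $\cA$, the algebra $A_r$ is again a completion of $\cA$; I will want to understand what $A_r$ can be. Recall the key structural fact about the standard Podle\'s sphere: its C*-algebra has exactly one character (the evaluation at the classical point of $S_{q,0}^2$), equivalently its space of one-dimensional representations is a single point. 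I will use this together with the fact that characters of $\cA$ extend to $A_u$ and (for $\GG_r$ co-amenable) identify with characters of $A_r$.

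Next I would bring in the Woronowicz characters $(f_z)_z$. The second step is to show the hypothetical $\GG$ cannot be of Kac type. If it were of Kac type, the Haar state $h$ would be a trace; but it is known that $S_{q,0}^2$ (for $0<q<1$) carries \emph{no} trace at all — more precisely the only tracial-like functional vanishes on a corner, and in fact the standard Podle\'s sphere admits no tracial state because its K-theory / ideal structure (it is, up to the unitization, a direct sum of two copies of the compacts glued along $\C(\TT)$, so it contains $\cK$ as an ideal) obstructs it. A faithful Haar state that is tracial would in particular be a faithful tracial state on $A$, contradicting this. Hence $\GG$ is not of Kac type, so the family $(f_z)$ is non-trivial; for $z$ on the imaginary axis the $f_z$ are genuinely distinct $*$-characters of $\cA$, hence distinct characters of $A_u$.

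The third step is the punchline. By Theorem \ref{BMT} applied to $\GG_r$, since $h_r$ is faithful, $\GG_r$ is co-amenable if and only if $A_r$ admits a character. But $\cA$ does admit a character (the classical point survives in every completion, or simply: $\cA$ has a $*$-character and it extends to $A_u$ and factors to $A_r$ when $\GG_r$ is co-amenable — here one must be slightly careful and instead observe directly that the classical point of $S_{q,0}^2$ gives a character of $A=A_r$ since for this sphere $A$ itself has a character). So $\GG_r$ is co-amenable, whence $\rho$ and $\lambda$ are isomorphisms, $A=A_u=A_r$, and the co-unit $e$ together with \emph{all} the Woronowicz characters $f_z$ ($z$ imaginary) descend to distinct characters of $A=\C(S_{q,0}^2)$. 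Since the family is non-trivial this produces at least two distinct characters of $\C(S_{q,0}^2)$, contradicting the fact that this C*-algebra has a unique character. This contradiction shows no such $\Delta$ exists.

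The main obstacle, and the place where I expect the real work to be, is the non-Kac step: establishing rigorously that $S_{q,0}^2$ admits no faithful tracial state (indeed no tracial state supported where it would need to be), so that a tracial Haar measure is impossible. This requires invoking the explicit description of $\C(S_{q,0}^2)$ — its extension by $\cK$, or its representation theory — rather than anything soft, and it is the analogue here of the ``no character / has a faithful trace'' dichotomy exploited in Subsection \ref{traces}, only run in the opposite direction. A secondary subtlety is bookkeeping between $A$, $A_u$, $A_r$: one must make sure the character used to invoke co-amenability and the Woronowicz characters used to derive the contradiction all live on the \emph{same} algebra, which is exactly what co-amenability delivers once it is established.
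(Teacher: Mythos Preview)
Your overall strategy---combine co-amenability (via Theorem \ref{BMT}), the Woronowicz characters, uniqueness of the classical point, and nonexistence of a faithful trace---is exactly the paper's. But there are two concrete problems.

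First, your structural description of $\C(S_{q,0}^2)$ is wrong. The standard Podle\'s sphere is simply $\cK^+$, the minimal unitization of the compacts on a separable Hilbert space; it is not ``two copies of the compacts glued along $\C(\TT)$'' (you may be thinking of a generic Podle\'s sphere $S^2_{q,c}$ with $c\neq 0$, or of something else). Once you know $A\cong\cK^+$ everything you need is immediate: the only nonzero proper ideal is $\cK$, the only character is the quotient map to $\CC$, and any bounded trace vanishes on $\cK$ so there is no faithful tracial state. The ``real work'' you anticipate in the non-Kac step evaporates.

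Second, and more seriously, your reduction to $\GG_r$ has a gap that your own final paragraph half-acknowledges. Your non-Kac argument says: if $h$ were a trace it would be a \emph{faithful} trace on $A$, contradiction. But a priori $h$ need not be faithful, and if you pass to $\GG_r$ you only get a faithful trace on $A_r$, which could in principle be a proper quotient of $A$ (e.g.\ $\CC$, which certainly has a faithful trace). Likewise, to invoke Theorem \ref{BMT} for $\GG_r$ you need a character on $A_r$, and your justification is circular (``it factors to $A_r$ when $\GG_r$ is co-amenable''; ``$A=A_r$ since $A$ has a character''). The paper fixes this cleanly by first proving $h$ is faithful on $A$: if not, $A_r$ is a proper quotient of $\cK^+$, hence $A_r=\CC$ is commutative, which forces $\GG$ co-amenable and $A=A_r$, absurd. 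With $A=A_r$ established, BMT applies directly to $\GG$, co-amenability gives $A=A_u$, and then---here the paper runs your argument in reverse---the unique character of $\cK^+$ forces all $f_{it}$ to coincide, holomorphy gives $f_z\equiv f_0$, so $\GG$ is of Kac type and $h$ is a faithful trace on $\cK^+$, the final contradiction. Running it this way avoids precisely the $A/A_u/A_r$ bookkeeping you flagged as a difficulty.
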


\begin{proof}
The \cst-algebra of continuous functions on the standard Podle\'s quantum sphere is isomorphic to $\cK^+$, i.e.~the minimal unitization of the algebra $\cK$ of compact operators on a separable Hilbert space. Assume that $\Delta:\cK^+\to\cK^+\tens\cK^+$ is a comultiplication making $\GG=(\cK^+,\Delta)$ a compact quantum group. Then, first of all, the Haar measure of $\GG$ must be faithful. This is because if it were not faithful, then the \cst-algebra of continuous functions on the reduced compact quantum  group $\GG_r=(A_r,\Delta_r)$ would be one dimensional, and in particular commutative. As we pointed out in Section \ref{pre} commutative $A_r$ means that $\GG$ is co-amenable and $\GG_r=\GG$.

Therefore the Haar measure on $\GG$ is faithful. However $\cK^+$ clearly admits a character, so $\GG$ must be co-amenable by Theorem \ref{BMT}. This means that $\GG$ is at the same time universal, and so $\cK^+$ is the enveloping \cst-algebra of the Hopf $*$-algebra $\cA$ canonically associated to $\GG$. Now let $(f_z)_{z\in\CC}$ be the family of modular functionals on $\cA$ (\cite[Theorem 2.4]{cqg}). Then $(f_{it})_{t\in\RR}$ is a family of $*$-characters of $\cA$, so all these elements must be continuous on the universal completion $\cK^+$ of $\cA$. However there is only one character on $\cK^+$, so $f_{it}=f_0$ for all $t\in\RR$. By the fact that $(f_z)_{z\in\CC}$ is a holomorphic family (\cite[Theorem 2.4(2)]{cqg}) we see that the whole family is trivial, namely $f_z=f_0$ for all $z\in\CC$. By \cite[Theorem 2.5]{cqg} the Haar measure of $\GG$ is a trace, but there is no faithful trace on $\cK^+$.
\end{proof}

We will now show that the Natsume-Olsen quantum spheres cannot be endowed with a compact quantum group structure.

\begin{thm}
The Natsume-Olsen quantum sphere $S^2_t$ is not a compact quantum group for any $t\in[0,\tfrac{1}{2}[$.
\end{thm}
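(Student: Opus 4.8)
For $t=0$ the sphere $S^2_0$ is the classical $2$-sphere: a comultiplication on the commutative \cst-algebra $C(S^2)$ dualises to a continuous associative multiplication on $S^2$, and the density (cancellation) axioms in the definition of a compact quantum group would make $S^2$ into a compact topological group, which it cannot be (e.g.\ because its Euler characteristic is nonzero). So from now on I assume $t\in{]0,\tfrac12[}$, and the plan is to run the proof of Theorem~\ref{AdPod} almost verbatim, with $\cK^+$ replaced by $C(S^2_t)$. The first step is therefore to collect the \cst-algebraic input about $C(S^2_t)$ supplied by the construction in \cite{natsume} (see also \cite{ld}): that $C(S^2_t)$ contains a copy of $\cK$ as an essential ideal, with finite-dimensional commutative quotient $C(S^2_t)/\cK\cong\CC^2$, the two poles surviving the deformation as the only classical points. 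From essentiality and simplicity of $\cK$ it follows that every nonzero closed two-sided ideal of $C(S^2_t)$ contains $\cK$, hence every proper nonzero quotient of $C(S^2_t)$ is commutative; moreover $C(S^2_t)$ is noncommutative, its character space is a two-point discrete set, and $C(S^2_t)$ has no faithful tracial state, since $\cK$ carries no tracial state at all and therefore every tracial state on $C(S^2_t)$ annihilates the ideal $\cK\neq0$.

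Next, suppose $\Delta\colon C(S^2_t)\to C(S^2_t)\tens C(S^2_t)$ is a comultiplication making $\GG=(C(S^2_t),\Delta)$ a compact quantum group with Haar measure $h$. I first claim $h$ is faithful. If not, $\cJ=\{a\in C(S^2_t):h(a^*a)=0\}$ is a nonzero closed two-sided ideal, proper because $h$ is a state, so by the previous paragraph the reduced algebra $A_r=C(S^2_t)/\cJ$ is commutative; as recalled in Section~\ref{pre} this forces $\GG$ to be co-amenable with $\GG=\GG_r$, so $C(S^2_t)$ would itself be commutative — a contradiction. Hence $h$ is faithful, and since $C(S^2_t)$ admits a character, Theorem~\ref{BMT} shows that $\GG$ is co-amenable. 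In particular $\GG$ is its own universal version, so $C(S^2_t)=A_u$ is the enveloping \cst-algebra of the Hopf $*$-algebra $\cA$ underlying $\GG$.

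It remains to carry out the Woronowicz-character argument of Theorem~\ref{AdPod}. Let $(f_z)_{z\in\CC}$ be the family of modular functionals on $\cA$ (\cite[Theorem 2.4]{cqg}). For each $s\in\RR$ the functional $f_{is}$ is a $*$-character of $\cA$, hence extends to a character of $A_u=C(S^2_t)$ (\cite[Theorem 3.6]{coam}). The map $s\mapsto f_{is}$ is weak-$*$ continuous (because $s\mapsto f_{is}(a)$ is continuous for $a\in\cA$ and the $f_{is}$ are norm-one functionals on a \cst-algebra in which $\cA$ is dense), it takes values in the \emph{discrete} two-point character space of $C(S^2_t)$, and $f_{i0}=f_0$ is the co-unit; a continuous map from $\RR$ into a discrete space is constant, so $f_{is}=f_0$ for all $s\in\RR$. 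Since each function $z\mapsto f_z(a)$, $a\in\cA$, is entire (\cite[Theorem 2.4(2)]{cqg}) and coincides with $f_0(a)$ on the imaginary axis, it is constant; thus the family $(f_z)_z$ is trivial, $\GG$ is of Kac type, and $h$ is a trace by \cite[Theorem 2.5]{cqg}. But $h$ is faithful and $C(S^2_t)$ has no faithful trace, a contradiction which completes the proof.

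The only genuinely delicate point is the first step: extracting from the construction in \cite{natsume} the precise \cst-algebraic anatomy of $C(S^2_t)$, in particular that its ``quantum part'' is exactly $\cK$ sitting as an essential ideal (so that all proper quotients are commutative) and that consequently no faithful trace exists. Once this description is in hand, the compact-quantum-group reasoning is a routine transcription of the Podle\'s case. I would also note that the discreteness of the set of classical points is exactly what makes the step $f_{is}=f_0$ immediate; if those points formed a positive-dimensional space one would instead have to analyse continuous one-parameter subgroups of the compact group of characters of $A_u$, a complication that does not arise for the Natsume--Olsen spheres.
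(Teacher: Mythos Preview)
Your argument has a genuine gap at its foundation: the \cst-algebraic anatomy you ascribe to $C(S^2_t)$ is not the correct one. By \cite[Proposition 3.1 \& Corollary 3.2]{natsume}, for $t\in{]0,\tfrac12[}$ the algebra $B=C(S^2_t)$ sits in the short exact sequence
\[
0\longrightarrow\cK\tens\C(\TT)\longrightarrow B\longrightarrow\CC^2\longrightarrow 0,
\]
so the ``quantum part'' is $\cI=\cK\tens\C(\TT)$, not $\cK$. This ideal is very far from simple: its own closed ideals are all of the form $\cK\tens\cR$ with $\cR\triangleleft\C(\TT)$. Consequently your key structural claim---that every nonzero closed two-sided ideal of $B$ contains the quantum part, hence that every proper nonzero quotient of $B$ is commutative---is false, and the transcription of the Podle\'s argument breaks precisely at the step where you conclude that the Haar measure is faithful. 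The left kernel $\cJ$ of $h$ may perfectly well be a nonzero proper ideal with $B_r=B/\cJ$ noncommutative, and nothing you have written rules this out.

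The paper's proof is organised around exactly this difficulty. Rather than trying to show $h$ faithful directly, it first observes (as you also do, and this part survives, since $\cK\tens\C(\TT)$ also carries no tracial state) that $h$ cannot be a trace; the substantive new work is then to show that the reduced algebra $B_r$ nevertheless admits a character. This requires a genuine analysis of how $\cJ$ can sit relative to $\cI$: one rules out $B=\cI+\cJ$ (which would force $B_r\cong\cK\tens\C(X)$ for some closed $X\subset\TT$, a non-unital algebra), and then $B_r/\lambda(\cI)\cong B/(\cI+\cJ)$ is a nonzero quotient of $\CC^2$ and hence has a character. Only at that point does Theorem~\ref{BMT} give co-amenability, after which your Woronowicz-character endgame applies. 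Your closing remark that the only delicate point is extracting the structure of $C(S^2_t)$ from \cite{natsume} is exactly right---but the structure you extracted is the one for the standard Podle\'s sphere, not for the Natsume--Olsen spheres, and the correct structure forces the additional $\cI+\cJ$ argument that your proposal omits.
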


\begin{proof}
The family of Natsume quantum spheres consists really of only two elements. For the deformation parameter $t=0$ we obtain the classical two sphere, which is not a compact group, while for $t\in\bigl]0,\tfrac{1}{2}\bigr[$ the \cst-algebras describing the Natsume quantum sphere are all isomorphic to a fixed \cst-algebra $B$. What we will need now is that $B$ fits into the exact sequence
\[
\xymatrix{0\ar[r]&\cK\tens\C(\TT)\ar[r]&B\ar[r]^\pi&\CC^2\ar[r]&0}
\]
(\cite[Proposition 3.1 \& Corollary 3.2]{natsume}). If we assume that there exists $\Delta:B\to{B\tens{B}}$ such that $\GG=(B,\Delta)$ is a compact quantum group then the Haar measure $h$ of $\GG$ cannot be a trace. Indeed, note that for any $x$ in the ideal $\cI=\cK\tens\C(\TT)=\ker{\pi}\subset{B}$ and any trace $\tau$ on $B$ we have $\tau(x)=0$.\footnote{For any simple tensors $k_1\tens{f_1},k_2\tens{f_2}\in\cK\tens\C(\TT)$ we have $\tau(k_1k_2\tens{f_1f_2})=\tau(k_2k_1\tens{f_2f_1})=\tau(k_2k_1\tens{f_1f_2})$. Therefore $\tau(k\tens{f})=0$ for any $f\in\C(\TT)$ and any $k\in\cK$ which is a finite sum of commutators. From \cite[Theorem 1]{cptkom} we know that these are all compact operators, so $\tau=0$ on $\cK\tens\C(\TT)$. Equivalently we can use the simple fact already used in \cite{natsume} that $\cK\tens\C(\TT)$ is the crossed product $\C_0(\RR)\rtimes_\alpha\ZZ$, where $\alpha$ is translation by one. Clearly there is no invariant probability measure on $\RR$ for this action.}
In particular if $h$ were a trace then $h(x^*x)=0$ for all $x\in\cI$. Therefore if $\GG_r=(B_r,\Delta_r)$ is the reduced quantum group, the ideal $\cJ$ such that $B_r=B/\cJ$ (i.e.~the left kernel of $h$) contains $\cI$. But this means that $B_r$ is a quotient of $B/\cI=\CC^2$ which is commutative. As we argued in the proof of Theorem \ref{AdPod}, this is not possible since compact quantum groups with commutative \cst-algebras are universal.

We will now show that $B_r$ possesses a character. Recall (\cite[Definition 1.7 \& Proposition 3.5]{natsume}) that $B$ is generated by two elements $z$ and $\zeta$ satisfying the following relations:
\begin{eqnarray}
\zeta^*\zeta+z^2=\I=\zeta\zeta^*+(t\zeta\zeta^*+z)^2,\nonumber\\
\zeta{z}-z\zeta=t\zeta(\I-z^2),\qquad\;\quad\label{zeta}
\end{eqnarray}
where $t$ is a fixed number from $]0,\tfrac{1}{2}[$. Note that $z$ cannot belong to any proper ideal of $B$. Indeed, it follows from \eqref{zeta} that 
\[
\zeta=\tfrac{1}{t}\bigl(\zeta{z}-z\zeta+tz^2\bigr)
\]
belongs to any ideal to which $z$ belongs and thus any such ideal must be $B$. Therefore $z$ does not belong neither to $\cI$ nor to $\cJ$.

The case that $B=\cI+\cJ$ must be ruled out because we would then have by \cite[Corollary 1.8.4]{dix}
\begin{equation}\label{IJ}
B_r=B/\cI=(\cI+\cJ)/\cI=\cI/(\cI\cap\cJ)\cong\cK\tens\C(X)
\end{equation}
where $X$ is a compact space (closed subset of $\TT$ because $\cI\cap\cJ$ is an ideal in $\cI=\cK\tens\C(\TT)$, so it must be of the form $\cK\tens\cR$, where $\cR$ is an ideal of $\C(\TT)$). This, however, cannot happen because the last algebra on the right hand side of \eqref{IJ} does not have a unit.

Therefore $z\not\in(\cI+\cJ)$ and so, denoting by $\lambda$ denotes the quotient map $B\to{B_r}=B/\cJ$, we have that $\lambda(\cI)$ is a proper ideal of $B_r$. The (nonzero) quotient
\[
B_r/\lambda(\cI)
\]
is then isomorphic to $B/(\cI+\cJ)$ which in turn is isomorphic to a quotient of $B/\cI=\CC^2$. This algebra clearly has at least one character.

Since $B_r$ has a character, we have by Theorem \ref{BMT} that $\GG_r$ is co-amenable and consequently $\lambda$ is an isomorphism and $\GG=\GG_r=\GG_u$. In particular the Woronowicz characters $(f_{it})_{t\in\RR}$ of $\GG$ must be continuous on $B$. However, since $h$ is not a trace, the family $(f_{it})_{t\in\RR}$ must be a nontrivial continuous family of characters. This stands in contradiction with the fact that the space of characters of $B$ is a discrete two point space.
\end{proof}

\end{document}